\newtheorem{theorem}{Theorem}[section]
\newtheorem{lemma}[theorem]{Lemma}
\theoremstyle{definition}
\newtheorem{definition}[theorem]{Definition}
\theoremstyle{remark}
\numberwithin{equation}{section}
\numberwithin{equation}{section}
\newsavebox{\savepar}
\begin{document}

\title{Problem involving nonlocal operator}
\author{Ratan Kr. Giri, D. Choudhuri~\footnote{Corresponding
author: dc.iit12@gmail.com}~~and Amita Soni \\
}
\date{}
\maketitle

\begin{abstract}
\noindent The aim of this paper is to deal with the elliptic pdes
involving a nonlinear integrodifferential operator, which are
possibly degenerate and covers the case of fractional $p$-Laplacian
operator. We prove the existence of a solution in the weak sense to
the problem
\begin{align*}
\begin{split}
-\mathscr{L}_\Phi u & = \lambda |u|^{q-2}u\,\,\mbox{in}\,\,\Omega,\\
u & = 0\,\, \mbox{in}\,\, \mathbb{R}^N\setminus \Omega
\end{split}
\end{align*}
if and only if a weak solution to
\begin{align*}
\begin{split}
-\mathscr{L}_\Phi u & = \lambda |u|^{q-2}u +f,\,\,\,f\in L^{p'}(\Omega),\\
u & = 0\,\, \mbox{on}\,\, \mathbb{R}^N\setminus \Omega
\end{split}
\end{align*}
($p'$ being the conjugate of $p$), exists in a weak sense, for
$q\in(p, p_s^*)$ under certain condition on $\lambda$, where
$-\mathscr{L}_\Phi $ is a general nonlocal integrodifferential
operator of order $s\in(0,1)$ and $p_s^*$ is the fractional Sobolev
conjugate of $p$. We further prove the existence of a measure
$\mu^{*}$ corresponding to which a weak solution exists to the
problem
\begin{align*}
\begin{split}
-\mathscr{L}_\Phi u & = \lambda |u|^{q-2}u +\mu^*\,\,\,\mbox{in}\,\, \Omega,\\
u & = 0\,\,\, \mbox{in}\,\,\mathbb{R}^N\setminus \Omega
\end{split}
\end{align*}
depending upon the capacity.
\\
{\bf keywords}:~Nonlocal operators, fractional $p$-laplacian; elliptic PDE; fractional Sobolev space.\\
{\bf AMS classification}:~35J35, 35J60.
\end{abstract}

\section{Introduction}
In the recent years, a great attention has been focused on the study
of fractional and nonlocal operators of elliptic type, both, for
research in pure Mathematics and for concrete real world
applications. From a physical point of view, the nonlocal operators
play a crucial role in describing several phenomena such as, the
thin obstacle problem, optimization, phase transitions, material
science, water waves, geophysical fluid dynamics and mathematical
finance. For further details on these applications, the reader may
refer to \cite{cur}, \cite{caf1}, \cite{caf2} and the references
therein. For a general reference to this topic one may refer to the
recent article of V\'azquez \cite{vaz}. Off late there has been a
rapid growth in the literature on problems involving nonlocal
operators. A testimony to this can be found in \cite{sor},
\cite{col}, \cite{molica1}, \cite{molica2}, \cite{yu}, \cite{su} in
the form of existence and multiplicity results for nonlocal
operators like fractional Laplacian, fractional $p$-Laplacian in
combination with a convex or a concave type non linearity. An
eigenvalue problem for the fractional $p$-Laplacian and properties
like finding the smallest eigenvalue are studied in \cite{inna},
\cite{eig1} and \cite{eig}. The Brezis-Nirenberg results for the
fractional Laplacian and for the fractional $p$-Laplacian operator
has been considered in \cite{serv3} and \cite{perera} respective. A
Dirichlet boundary value problem in the case of fractional Laplacian
with polynomial type nonlinearity using variational methods has been
studied in \cite{deb}, \cite{serv}, \cite{serv2}, \cite{cabre}. In
\cite{attar}, the authors proved the existence of weak solution on
the fractional $p$-Laplacian equations with weight for any datum in
$L^1$. Recently, Kussi et. al \cite{kussi}, has established an
existence, regularity and potential theory for a nonlocal
integrodifferential equations involving measure data.
The nonlocal elliptic operators considered here possibly degenerate
and cover the case of the fractional $p$-Laplacian operator. Based
on some generalization of the Wolff potential theory, the authors
obtained the existence of a weak solution belonging to a suitable fractional Sobolev space.\\
Motivated by the interest shared by the mathematical community in
this topic,  we study here the equivalence of the following two
problems involving a nonlocal operator,
\begin{align}
\begin{split}
P_1:~~-\mathscr{L}_\Phi u & = \lambda |u|^{q-2}u\,\,\mbox{in}\,\,\Omega,\\
u & = 0\,\, \mbox{on}\,\, \mathbb{R}^N\setminus \Omega\label{e1}
\end{split}
\end{align}
and
\begin{align}
\begin{split}
P_2:~-\mathscr{L}_\Phi u & = \lambda |u|^{q-2}u +f,\,\,\,f\in L^{p'}(\Omega),\\
u & = 0\,\, \mbox{on}\,\, \mathbb{R}^N\setminus \Omega\label{e2}
\end{split}
\end{align}
in the sense that if one problem has a nontrivial weak solution then
the other one also has a nontrivial weak solution. Here $\Omega$ is
a bounded open subset of $\mathbb{R}^N$ for $N\geq2$ and
$-\mathscr{L}_\Phi$ is a nonlocal operator (refer, \cite{kussi})
which is defined as
\begin{equation}
<-\mathscr{L}_\Phi u, \varphi>
=\int_{\mathbb{R}^N}\int_{\mathbb{R}^N}
\Phi(u(x)-u(y))(\varphi(x)-\varphi(y))K(x,y)dxdy,\label{e3}
\end{equation}
for every smooth function $\varphi$ with compact support, i.e.,
$\varphi \in C_c^\infty(\mathbb{R}^N)$. The function $\Phi$ is a
real valued continuous function over $\mathbb{R}$, satisfying
$\Phi(0)=0$ together with the following monotonicity property
\begin{equation}
\Lambda ^{-1} |t|^p \leq \Phi(t)t\leq \Lambda |t|^p, \,\,\forall
\,t\in \mathbb{R}.\label{e4}
\end{equation}
The kernel $K : \mathbb{R}^N \times \mathbb{R}^N \rightarrow
\mathbb{R}$ is a measurable function satisfying the following
ellipticity property
\begin{equation}
\frac{1}{\Lambda |x-y|^{N+sp}} \leq K(x,y) \leq
\frac{\Lambda}{|x-y|^{N+sp}}, \,\forall x, y \in \mathbb{R}^N, x\neq
y,\label{e5}
\end{equation}
where $\Lambda \geq 1$, $s\in (0,1), p > 2- \frac{s}{N}$ with $
q\in(p, \frac{Np}{N-sp}= p_s^{*})$ and $p'= \frac{p}{p-1}$, the
conjugate of $p$. Assumptions made in $(\ref{e4})$ and $(\ref{e5})$
makes the nonlocal operator $-\mathscr{L}_\Phi$ to be an elliptic
operator. Note that, upon taking the special case $\Phi(t)=
|t|^{p-2}t$ with $K(x,y)= |x-y|^{-(N+sp)}$ in $(\ref{e3})$, we
recover the fractional
$p$-Laplacian, defined by $(-\Delta)_p^su=\int_{\Omega}\int_{\Omega}\frac{|u(x)-u(y)|^p}{|x-y|^{N+ps}}dxdy$. 
In other words, the nonlocal operator $-\mathscr{L}_\Phi$, is a
generalization of the fractional $p$-Laplacian for $1 \leq p < \infty$ and $s\in (0,1)$.\\
A similar type of equivalence result for problems $P_1$ and $P_2$,
but involving the local operator $-\Delta_p$, defined as
$\nabla\cdot(|\nabla (.)|^{p-2}\nabla (.))$, in place of nonlocal
operator $-\mathscr{L}_\Phi$, are obtained in the work of Giri and
Choudhuri \cite{giri}. In this paper,  we use the variational
approach to the non-local framework. Inspired by the fractional
Sobolev spaces, we will work in a functional analytic set up, in
order to correctly encode the Dirichlet boundary datum in the
variational formulation. We also characterize the measures
corresponding to which the problem
\begin{align}
\begin{split}
-\mathscr{L}_\Phi u & = \lambda |u|^{q-2}u+\mu^*\,\,\mbox{in}\,\,\Omega,\nonumber\\
u & = 0\,\, \mbox{inn}\,\, \mathbb{R}^N\setminus \Omega\nonumber
\end{split}
\end{align}
has a weak solution using the notion of {\it capacity}.
 The paper is organized as
follows. In section $2$, we present some useful tools and
preliminaries that we will use through this paper, like fractional
Sobolev spaces $W^{s, p}(\mathbb{R^N})$ and embedding results of
$W^{s, p}(\mathbb{R^N})$ into the Lebesgue spaces. We also define
the weak sense in which the solutions to the problems $P_1$ and
$P_2$ are defined. In section $3$, we discuss a few preliminary
results and the main result.
\section{Functional analytic setup and the Main tools}
In this section, we discuss the functional analytic setting that
will be used below. Due to the nonlocal character of
$\mathscr{L}_\Phi$ defined in $\ref{e3}$, it is natural to work with
Sobolev space $W^{s, p}(\mathbb{R^N})$ and express the Dirichlet
condition on $\mathbb{R}^N\setminus \Omega$ rather than
$\partial\Omega$. Though fractional Sobolev spaces are well known
since the beginning of the last century, especially in the field of
harmonic analysis, they have become increasingly popular in the last
few years, under the impulse of the work of Caffarelli \& Silvestre
\cite{caf2} and the references there in. We now turn to our problem
for which we provide the variational setting on a suitable function
space for $(\ref{e1})$ and $(\ref{e2})$, jointly with some
preliminary results. For all measurable functions $u:
\mathbb{R}^N\rightarrow \mathbb{R}$, we set
$$||u||_{L^p(\mathbb{R}^N)}=\left(\int_{\mathbb{R}^N}|u(x)|^p dx\right)^{\frac{1}{p}},$$
$$[u]_{s,p}= \left(\int_{\mathbb{R}^{2N}}\frac{|u(x)-u(y)|^p}{|x-y|^{N+sp}}
dxdy\right)^{\frac{1}{p}},$$ where $p\in(1, \infty)$ and $s\in
(0,1)$. The fractional Sobolev space $W^{s,p}(\mathbb{R}^N)$ is
defined as the space of all function $u\in L^p(\mathbb{R}^N)$ such
that $[u]_{s,p}$ is finite and endowed with the norm
$$||u||_{W^{s,p}(\mathbb{R}^N)}=
\left(||u||_{L^p(\mathbb{R}^N)}^p+[u]_{s,p}^p\right)^{\frac{1}{p}}.$$
More on fractional Sobolev space can be found in Nezza et al.
\cite{nezza} and the references therein. We now define a closed
linear
subspace of $W^{s,p}(\mathbb{R}^N)$: \\
$$W_0^{s,p}(\Omega)= \{u\in W^{s,p}(\mathbb{R}^N): u=0\,\,
a.e.\,\,\text{in}\,\,\mathbb{R}^N\setminus\Omega\}$$. It is trivial
to see that the norms $||\cdot||_{L^p(\mathbb{R}^N)}$ and $||\cdot
||_{L^p(\Omega)}$ agree on $W_0^{s,p}(\Omega)$. We also have a
Poincar\'{e} type inequality which is as follows.
$$||u||_{L^p(\Omega)}\leq C
[u]_{s,p},\,\,\,\mbox{for\,\,all}\,\,u\in W_0^{s,p}(\Omega).$$ Thus,
we can equivalently renorm $W_0^{s,p}(\Omega)$, by setting
$||u||_{W_0^{s,p}(\Omega)}= [u]_{s,p}$, for every $u\in
W_0^{s,p}(\Omega)$. Let $p_s^*= \frac{Np}{N-sp}$ , with the
agreement that $p_s^*=\infty$ if $N\geq sp$. It is well known that
$(W_0^{s,p}(\Omega), ||\cdot||_{W_0^{s,p}(\Omega)})$ is a uniformly
convex reflexive Banach space, continuously embedded into
$L^r(\Omega)$, for all $r\in[1, p_s^*]$ if $sp<N$, for all $1\leq r
<\infty$ if $N=sp$ and into $L^\infty(\Omega)$ if $N<sp$. It is also
compactly embedded in $L^r(\Omega)$ for any $r\in[1, p_s^*)$ if
$N\geq sp$ and in $L^\infty(\Omega)$ for $N<sp$. Furthermore,
$C_c^\infty(\Omega)$ is a dense subspace of $W_0^{s,p}(\Omega)$ with
respect to $||\cdot||_{W_0^{s,p}(\Omega)}$. For further detail on
the embedding results, we refer the reader to \cite{sre1},
\cite{sre2} and
the references there in.\\
We define an associated energy functional to the problem $P_1$ as
$$I_{P_1}(u) =
\int_{\mathbb{R}^N}\int_{\mathbb{R}^N}
\mathcal{P}\Phi(u(x)-u(y))K(x,y)dx dy -\frac{\lambda}{q}\int_\Omega
|u|^q dx,$$ where $\mathcal{P}\phi(t) :=
\int_0^{|t|}\phi(\tau)d\tau$ being the primitive of $\Phi$. Thus by
$(\ref{e4})$ we have
\begin{equation} \Lambda^{-1}\frac{|t|^p}{p}\leq
\mathcal{P}\Phi(t)\leq \Lambda \frac{|t|^p}{p},\label{e6}
\end{equation}
for $t\neq 0$ and $\mathcal{P}\Phi(0) =0$. The Fr\'{e}chet
derivative of $I$, which is in $W_0^{-s, p'}(\Omega)$, the dual
space of $W_0^{s, p}(\Omega)$ where $p'= \frac{p}{p-1}$ and is
defined as
\begin{equation}
<I'(u), v> = \int_{\mathbb{R}^N}\int_{\mathbb{R}^N}
\Phi(u(x)-u(y))(v(x)-v(y))K(x,y)dxdy -\lambda\int_\Omega |u|^{q-2}u
vdx,\label{e8'}.
\end{equation}
\begin{definition}{\label{df1}}
We say that $u\in W_0^{s,p}(\Omega)$ is a weak (energy) solution to
the problem $P_1$ if
$$\int_{\mathbb{R}^N}\int_{\mathbb{R}^N}\Phi(u(x)-u(y))(\varphi(x)-\varphi(y))K(x,y)dxdy
= \lambda\int_\Omega |u|^{q-2}u\varphi dx ,$$ holds for every
$\varphi\in C_c^\infty(\Omega)$.
\end{definition}
\noindent The weak solutions of the problem $P_1$ are the critical
points of the energy functional $I_{P_1}$.
\noindent Similarly, let the corresponding associated energy
functional to the problem $P_2$ be denoted by $I_{P_2}$ which is
defined as follows.
\begin{equation}
I_{P_2}(u)= \int_{\mathbb{R}^N}\int_{\mathbb{R}^N}
\mathcal{P}\Phi(u(x)-u(y))K(x,y)dx dy -\frac{\lambda}{q}\int_\Omega
|u|^q dx - \int_\Omega f udx\label{e7}
\end{equation}
whose Fr\'{e}chet derivative is defined as
\begin{equation}
<I'(u), v> = \int_{\mathbb{R}^N}\int_{\mathbb{R}^N}
\Phi(u(x)-u(y))(v(x)-v(y))K(x,y)dxdy -\lambda\int_\Omega |u|^{q-2}u
vdx-\int_\Omega fvdx,\label{e8}
\end{equation}
for every $v \in W_0^{s, p}(\Omega)$.
\begin{definition}
$u\in W_0^{s, p}(\Omega)$ is a weak (energy) solution of the problem
$P_2$ if
\begin{equation*}
\int_{\mathbb{R}^N}\int_{\mathbb{R}^N}\Phi(u(x)-u(y))(\varphi(x)-\varphi(y))K(x,y)dxdy
- \lambda\int_\Omega |u|^{q-2}u\varphi dx - \int_\Omega f\varphi dx
=0,
\end{equation*}
 for all $\varphi \in C_c^\infty(\Omega)$.
 \end{definition}
The main results in this paper, when stated heuristically, are as
follows. The problem $P_1$ has a nontrivial weak solution if and
only if the problem $P_2$ has a non trivial weak solution. In the
implication from $P_1$ to $P_2$, the main tools we will use the
Mountain-Pass Theorem \cite{evans,kesavan}. For the converse part,
we guarantee the existence of a weak solution to the problem $P_1$
by considering a sequence $P_2$ type problems whose nonhomogeneous
part will be denoted by $f_n$. In addition to this we will assume
that $f_n \rightarrow 0$ in $L^{p'}(\Omega)$. The corresponding
sequence of weak solutions $(u_n)$ to the problem $P_2$, which will
be shown to have a strongly convergent subsequence. We complete the
proof of the converse part by showing that this limit $u$ is a weak
solution to $P_1$. One common result, which will be used in proving
both the implications is as follows.
\begin{theorem}
If the sequence $u_n \rightharpoonup u$ in $W_0^{s, p}(\Omega)$,
then $$\int_{\mathbb{R}^N}\int_{\mathbb{R}^N}
(\Phi(u_n(x)-u_n(y))-\Phi(u(x)-u(y)))(\varphi(x)-\varphi(y))K(x,y)dxdy
\rightarrow 0$$ for all $\varphi\in C_c^\infty(\Omega)$.
\end{theorem}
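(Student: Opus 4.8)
The plan is to recast the statement as a weak-convergence assertion in a Lebesgue space built on the measure $d\mu(x,y) = K(x,y)\,dx\,dy$ on $\mathbb{R}^{2N}$, and then to conclude by a duality pairing. Writing $U_n(x,y) = u_n(x) - u_n(y)$, $U(x,y) = u(x) - u(y)$ and $V(x,y) = \varphi(x) - \varphi(y)$, the integral in question is exactly the pairing $\langle \Phi(U_n) - \Phi(U),\, V\rangle$ in the duality between $L^{p'}(\mathbb{R}^{2N}, d\mu)$ and $L^{p}(\mathbb{R}^{2N}, d\mu)$. Hence it suffices to prove that $\Phi(U_n) \rightharpoonup \Phi(U)$ weakly in $L^{p'}(\mathbb{R}^{2N}, d\mu)$ while checking that the fixed test datum $V$ lies in the dual space $L^{p}(\mathbb{R}^{2N}, d\mu)$.

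First I would record the pointwise growth of $\Phi$. The left inequality in $(\ref{e4})$ forces $\Phi(t)t \geq 0$, so $\Phi(t)$ and $t$ share sign, and the right inequality then gives $|\Phi(t)| \leq \Lambda|t|^{p-1}$ for all $t$. Since $(p-1)p' = p$, this yields $|\Phi(U_n)|^{p'} \leq \Lambda^{p'}|U_n|^p$ pointwise; combining with the upper bound on $K$ in $(\ref{e5})$ and the definition of the Gagliardo seminorm gives $\int_{\mathbb{R}^{2N}} |\Phi(U_n)|^{p'}\,d\mu \leq \Lambda^{p'+1}[u_n]_{s,p}^p$. Because $u_n \rightharpoonup u$ in $W_0^{s,p}(\Omega)$ the sequence is norm-bounded, say $[u_n]_{s,p} \leq M$, so $\Phi(U_n)$ is bounded in $L^{p'}(\mathbb{R}^{2N}, d\mu)$. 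The same computation with $\varphi$ in place of $u_n$, together with $\varphi \in C_c^\infty(\Omega) \subset W^{s,p}(\mathbb{R}^N)$, shows $\int_{\mathbb{R}^{2N}} |V|^p\,d\mu \leq \Lambda[\varphi]_{s,p}^p < \infty$, so indeed $V \in L^p(\mathbb{R}^{2N}, d\mu)$.

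Next I would produce a.e. convergence and pass to the weak limit. To prove the scalar quantity $J_n := \int_{\mathbb{R}^{2N}} (\Phi(U_n) - \Phi(U)) V\,d\mu$ tends to $0$, I would use the routine principle that it suffices to show every subsequence admits a further subsequence with $J \to 0$. Given a subsequence, it still converges weakly to $u$ and is bounded, so by the compact embedding of $W_0^{s,p}(\Omega)$ into $L^p(\Omega)$ (valid as $p < p_s^*$) it converges strongly in $L^p(\Omega)$, and a further subsequence converges to $u$ a.e. on $\mathbb{R}^N$ (using $u_n = 0$ outside $\Omega$). Off a null set $N_0$ we then have $u_n(x) \to u(x)$, and by Fubini the set $(N_0 \times \mathbb{R}^N) \cup (\mathbb{R}^N \times N_0)$ is $\mu$-null, whence $U_n \to U$ a.e.-$\mu$; continuity of $\Phi$ with $\Phi(0)=0$ gives $\Phi(U_n) \to \Phi(U)$ a.e.-$\mu$. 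Along this a.e.-convergent, $L^{p'}$-bounded subsequence I would invoke the standard lemma that in a reflexive $L^{p'}$ over a $\sigma$-finite measure a norm-bounded, a.e.-convergent sequence converges weakly to its a.e. limit; pairing $\Phi(U_n) \rightharpoonup \Phi(U)$ with $V \in L^p(\mathbb{R}^{2N}, d\mu)$ then forces $J \to 0$ along the subsequence, which completes the argument.

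I expect the main obstacle to be that weak-convergence lemma: the passage from a.e. convergence plus a uniform norm bound to genuine weak convergence is the one non-formal step. Its justification rests on identifying the weak limit of any weakly convergent subsequence with the a.e. limit, by testing against indicators of finite-measure sets and using Egorov's theorem together with the uniform integrability supplied by the $L^{p'}$ bound with $p' > 1$; here reflexivity of $L^{p'}$ (equivalently $p>1$) and $\sigma$-finiteness of $d\mu$ are essential. The estimates of the second paragraph are mechanical, and the measure-theoretic bookkeeping (Fubini for the exceptional null set, vanishing outside $\Omega$) is routine once stated.
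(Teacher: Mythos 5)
Your proof is correct, and the steps you flag as needing care are indeed the right ones: the sign information hidden in (\ref{e4}) legitimately yields $|\Phi(t)|\le \Lambda|t|^{p-1}$, hence the uniform $L^{p'}(d\mu)$ bound on $\Phi(U_n)$ with $d\mu=K\,dx\,dy$; the test datum $V=\varphi(x)-\varphi(y)$ lies in $L^{p}(d\mu)$ by (\ref{e5}); the measure $\mu$ is absolutely continuous with respect to Lebesgue measure on $\mathbb{R}^{2N}$ and $\sigma$-finite (its density $K$ is finite off the diagonal), so your a.e.-convergence bookkeeping and the lemma ``bounded in $L^{p'}$ plus a.e.\ convergence implies weak convergence in $L^{p'}$'' (valid since $p'>1$) both apply; and the subsequence-of-a-subsequence principle closes the argument.

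The comparison with the paper is unusual: the paper gives no proof of this theorem at all, but simply defers to Theorem 1.1 of Kuusi--Mingione--Sire \cite{kussi}. So your argument is not a variant of the paper's proof; it is a self-contained substitute for an external citation. What the citation buys the authors is brevity and the full strength of the machinery developed in \cite{kussi}; what your argument buys is transparency and economy of hypotheses --- it uses only the growth/sign condition (\ref{e4}), the kernel bounds (\ref{e5}), continuity of $\Phi$ with $\Phi(0)=0$, and the compact embedding $W_0^{s,p}(\Omega)\hookrightarrow L^p(\Omega)$, all of which the paper itself records in Section 2. In particular, your proof makes visible that the theorem is really a soft duality statement and needs none of the potential-theoretic apparatus of \cite{kussi}. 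If you write this up formally, promote the one non-formal step --- the identification of the weak limit of a norm-bounded, a.e.-convergent sequence (via Mazur's lemma, or via Egorov/Vitali tested against indicators of finite-$\mu$-measure sets) --- to a stated lemma with its own proof, since everything else is mechanical estimation.
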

\noindent Proof of this theorem, can be found in the work of Kussi
\cite{kussi}, Theorem 1.1.
\section{Existence Results}
We bgin the section by assuming that the problem $P_1$ has a
nontrivial weak solution in $W_0^{s, p}(\Omega)$. In order to show
the existence of a non trivial weak solution to the problem $P_2$,
we will use the Mountain-pass theorem. To apply the Mountain-pass
theorem, we need the following technical lemmas.
\begin{lemma}\label{lem1}
The function $I$ defined in ($\ref{e7}$) is a $C^1$ functional over
$W_0^{s, p}(\Omega)$.
\end{lemma}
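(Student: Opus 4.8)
The plan is to show that the functional $I_{P_2}$ defined in (\ref{e7}) is continuously Fr\'{e}chet differentiable on $W_0^{s,p}(\Omega)$. I will split the functional into three pieces: the nonlocal ``energy'' term $\mathcal{E}(u) = \int_{\mathbb{R}^N}\int_{\mathbb{R}^N}\mathcal{P}\Phi(u(x)-u(y))K(x,y)\,dx\,dy$, the nonlinear term $\mathcal{N}(u) = \frac{\lambda}{q}\int_\Omega |u|^q\,dx$, and the linear term $\mathcal{F}(u) = \int_\Omega fu\,dx$. The linear term is the easiest: $\mathcal{F}$ is bounded and linear by H\"older's inequality (using $f\in L^{p'}(\Omega)$ and the embedding $W_0^{s,p}(\Omega)\hookrightarrow L^p(\Omega)$), so it is smooth with constant derivative, and I would dispose of it first.

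For the two nonlinear pieces my strategy is the standard one for Nemytskii-type functionals: first verify that the G\^ateaux derivative exists and coincides with the expression (\ref{e8}), then prove that $u\mapsto I'_{P_2}(u)$ is continuous from $W_0^{s,p}(\Omega)$ into its dual $W_0^{-s,p'}(\Omega)$; by a standard result (G\^ateaux differentiable with continuous derivative implies $C^1$), this yields the claim. For the G\^ateaux derivative of $\mathcal{E}$, I would fix $u,v$ and differentiate $t\mapsto \mathcal{P}\Phi(u(x)-u(y)+t(v(x)-v(y)))$ under the double integral, using $\frac{d}{dt}\mathcal{P}\Phi = \Phi$ and the growth bound (\ref{e4}), which gives $|\Phi(t)|\le \Lambda|t|^{p-1}$; the resulting integrand is dominated by an integrable function via H\"older applied to the Gagliardo seminorm, justifying differentiation under the integral sign by dominated convergence. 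For $\mathcal{N}$, the map $u\mapsto |u|^{q-2}u$ is the relevant Nemytskii operator from $L^q(\Omega)$ to $L^{q'}(\Omega)$, and the compact embedding $W_0^{s,p}(\Omega)\hookrightarrow L^q(\Omega)$ for $q\in(p,p_s^*)$ is what keeps everything finite and well defined.

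The continuity of the derivative is where the real work lies, so I expect the main obstacle to be showing $u_n\to u$ in $W_0^{s,p}(\Omega)$ implies $I'_{P_2}(u_n)\to I'_{P_2}(u)$ in operator norm. For the nonlinear term $\mathcal{N}'$ this follows from continuity of the Nemytskii operator $u\mapsto |u|^{q-2}u$ on the Lebesgue spaces together with the embedding. For the nonlocal term $\mathcal{E}'$, the difficulty is that $\Phi$ is only assumed continuous satisfying (\ref{e4}), not H\"older or Lipschitz, so I cannot simply estimate $|\Phi(a)-\Phi(b)|$ pointwise; instead I would pass to the measures on $\mathbb{R}^{2N}$ with density $K(x,y)$, use that $u_n\to u$ in norm forces $U_n(x,y):=u_n(x)-u_n(y)\to U(x,y)$ in the weighted $L^p$ space, extract an a.e.-convergent subsequence with an $L^p$ dominating function, apply continuity of $\Phi$ pointwise together with the growth bound $|\Phi(U_n)|\le\Lambda|U_n|^{p-1}$, and invoke a generalized dominated convergence (Vitali) argument to conclude $\Phi(U_n)\to\Phi(U)$ in the weighted $L^{p'}$ space. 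Testing against the factor $v(x)-v(y)$ and taking the supremum over $\|v\|_{W_0^{s,p}}\le 1$ then gives convergence in the dual norm. A subsequence-of-every-subsequence argument upgrades this from the extracted subsequence to the full sequence, completing the proof that $I_{P_2}\in C^1(W_0^{s,p}(\Omega))$.
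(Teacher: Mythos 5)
Your proposal is correct, but it follows a genuinely different---and substantially more complete---route than the paper's own proof. The paper declares the differentiability of $I$ ``trivial'' and then, to justify that $I'$ is continuous, records only a H\"older/embedding estimate of the form $|\langle I'(u),v\rangle|\le C(u)\,\|v\|_{W_0^{s,p}(\Omega)}$. That estimate shows, for each \emph{fixed} $u$, that $I'(u)$ is a bounded linear functional, i.e.\ $I'(u)\in W^{-s,p'}(\Omega)$; it says nothing about continuity of the map $u\mapsto I'(u)$ into the dual, which is the actual content of the $C^1$ claim. Your plan supplies precisely the pieces the paper glosses over: the splitting $I=\mathcal{E}-\mathcal{N}-\mathcal{F}$; G\^ateaux differentiability of $\mathcal{E}$ by differentiation under the integral, dominated via the growth bound $|\Phi(t)|\le\Lambda|t|^{p-1}$ (which does follow from (\ref{e4})) and H\"older against the Gagliardo seminorm; continuity of the Nemytskii map $u\mapsto|u|^{q-2}u$ from $L^q(\Omega)$ to $L^{q'}(\Omega)$; and, for the nonlocal piece, extraction of an a.e.-convergent subsequence with an $L^p$ dominant, pointwise continuity of $\Phi$, dominated (Vitali) convergence in the kernel-weighted $L^{p'}$ space, and the subsequence-of-every-subsequence upgrade. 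That last device is exactly what the hypotheses demand: since $\Phi$ is merely continuous with growth (\ref{e4}), no pointwise estimate of $|\Phi(a)-\Phi(b)|$ is available, so continuity of $\mathcal{E}'$ cannot be obtained by the naive direct estimate. The closing appeal to ``G\^ateaux differentiable with continuous derivative implies $C^1$'' is standard and legitimate. In short, the paper's argument buys brevity at the price of proving only well-definedness and boundedness of $I'(u)$, whereas your argument proves the lemma.

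Two small remarks. First, for $\mathcal{N}$ you only need the \emph{continuous} (not compact) embedding $W_0^{s,p}(\Omega)\hookrightarrow L^q(\Omega)$, $q\in(p,p_s^*)$. Second, both you and the paper use $\frac{d}{dt}\mathcal{P}\Phi=\Phi$, which with the paper's definition $\mathcal{P}\Phi(t)=\int_0^{|t|}\Phi(\tau)\,d\tau$ is literally valid only when $\Phi$ is odd (in general the derivative is $\Phi(|t|)\,\mathrm{sgn}(t)$); this is a defect of the paper's setup rather than of your argument, and it does not affect the structure of your proof.
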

\begin{proof}
It is trivial to see that the functional $I$ is differentiable over
$W_0^{s,p}(\Omega)$. Thus it is enough to show that that $I'(u)$ is
continuous. Thus from ($\ref{e8}$), for each $u\in
W_0^{s,p}(\Omega)$ we have
\begin{align*}
|<I'(u), v>| & \leq \int_{\mathbb{R}^N}\int_{\mathbb{R}^N}
|\phi(u(x)-u(y))||(v(x)-v(y))|K(x,y)dxdy + \lambda \int_\Omega
|u|^{q-1}|v|dx + \int_\Omega |f||v|dx\\
& \leq \Lambda^2 \int_{\mathbb{R}^N}\int_{\mathbb{R}^N}
\frac{|u(x)-u(y)|^{p-1}|v(x)-v(y)|}{|x-y|^{N+sp}}dxdy + \lambda
||u||_{\frac{q}{q-1}}||v||_q +||f||_{p'}||v||_p\\
& \leq \left[
\left|\left|\frac{|u(x)-u(y)|^{p-1}}{|x-y|^{\frac{N+sp}{p'}}}\right|\right|_{p'}+
C_1 \lambda ||u||_{\frac{q}{q-1}}+ C_2||f||_{p'}
\right]||v||_{W_0^{s, p}(\Omega)},
\end{align*}
where $C_1, C_2$ are constant due to the embedding of $W_0^{s,
p}(\Omega)$ into $L^q(\Omega)$ for $q\in(p, p_s^*)$ and into
$L^p(\Omega)$ respectively. Thus $I$ is a $C^1$ functional over
$W_0^{s, p}(\Omega)$.
\end{proof}
\begin{lemma}\label{lem2}
There exists $u_0, u_1\in W_0^{s, p}(\Omega)$ and a positive real
number $C_3$ such that $I(u_0), I(u_1)<C_3$ and $I(v)\geq C_3$, for
every $v$ satisfying $||v-u||_{W_0^{s, p}(\Omega)}=r$ for some
$r>0$.
\end{lemma}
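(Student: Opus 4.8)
The plan is to verify that $I=I_{P_2}$ has the two-valley geometry required by the Mountain-Pass Theorem: a base point $u_0$ of small energy, a sphere of some radius $r$ centered at $u_0$ on which $I$ stays above a positive level $C_3$, and a far point $u_1$ outside that sphere with $I(u_1)<C_3$. I read the center ``$u$'' in the statement as this base point and take $u_0=0$, so that $I(u_0)=0<C_3$ automatically once $C_3>0$ is produced. Throughout I write $\|u\|=\|u\|_{W_0^{s,p}(\Omega)}$.

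First I would bound the three pieces of $I$ in (\ref{e7}) separately. By the lower bound $\mathcal{P}\Phi(t)\geq \Lambda^{-1}|t|^p/p$ of (\ref{e6}) and the ellipticity $K(x,y)\geq(\Lambda|x-y|^{N+sp})^{-1}$ of (\ref{e5}), the nonlocal term dominates $\frac{1}{p\Lambda^2}[u]_{s,p}^p=\frac{1}{p\Lambda^2}\|u\|^p$. The superlinear term is controlled through the continuous embedding $W_0^{s,p}(\Omega)\hookrightarrow L^q(\Omega)$ (available since $q\in(p,p_s^*)$), giving $\frac{\lambda}{q}\int_\Omega|u|^q\leq \frac{\lambda C_q^q}{q}\|u\|^q$, and the forcing term by H\"{o}lder together with $W_0^{s,p}(\Omega)\hookrightarrow L^p(\Omega)$, giving $|\int_\Omega fu|\leq C_p\|f\|_{p'}\|u\|$. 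On the sphere $\|u\|=r$ these combine to
\[
I(u)\ \geq\ g(r):=\frac{1}{p\Lambda^2}r^p-\frac{\lambda C_q^q}{q}r^q-C_p\|f\|_{p'}\,r .
\]

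The next step is to choose $r$ so that $g(r)>0$ and then set $C_3:=g(r)$. Factoring $g(r)=r\,h(r)$ with $h(r)=\frac{1}{p\Lambda^2}r^{p-1}-\frac{\lambda C_q^q}{q}r^{q-1}-C_p\|f\|_{p'}$, I would maximize the forcing-free part $\frac{1}{p\Lambda^2}r^{p-1}-\frac{\lambda C_q^q}{q}r^{q-1}$ over $r>0$; since $q>p$ this part is positive for small $r$ and tends to $-\infty$, so it attains a positive maximum $M=M(\lambda,p,q,\Lambda,C_q)$ at some $r=r_*$. Then $h(r_*)=M-C_p\|f\|_{p'}$, so $g(r_*)>0$ exactly when $\|f\|_{p'}$ is small compared with $M$ --- equivalently, when $\lambda$ is small enough to make $M$ large. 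This is precisely the ``condition on $\lambda$'' advertised in the introduction and the quantity that the hypothesis $f_n\to0$ controls in the converse argument. I would fix this $r_*$ and set $C_3=g(r_*)>0$.

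Finally, for the far valley I would test $I$ along a ray: for a fixed $0\neq\phi\in W_0^{s,p}(\Omega)$ the upper bound $\mathcal{P}\Phi(t)\leq\Lambda|t|^p/p$ of (\ref{e6}) with the upper ellipticity bound yields
\[
I(t\phi)\ \leq\ \frac{\Lambda^2}{p}t^p[\phi]_{s,p}^p-\frac{\lambda}{q}t^q\|\phi\|_{L^q(\Omega)}^q-t\int_\Omega f\phi .
\]
As $q>p$, the middle term forces $I(t\phi)\to-\infty$, so I would pick $t$ so large that $\|t\phi\|=t\|\phi\|>r_*$ and $I(t\phi)<C_3$, and set $u_1=t\phi$. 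The main obstacle is the linear forcing term $\int_\Omega fu$: unlike the superlinear term it is not absorbed by the leading $\|u\|^p$ near the origin, so positivity of $g$ on the separating sphere is not automatic and genuinely rests on the calibration of $\|f\|_{p'}$ against $\lambda$ above; once that is in place, every remaining estimate is a routine consequence of (\ref{e4})--(\ref{e6}) and the embeddings recalled in Section 2.
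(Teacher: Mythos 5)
Your proposal is correct and takes essentially the same route as the paper's proof: the same lower bound $I(u)\geq \frac{1}{p\Lambda^2}r^p-\frac{\lambda C r^q}{q}-C'\|f\|_{p'}r$ on the sphere $\|u\|_{W_0^{s,p}(\Omega)}=r$ centered at $u_0=0$, maximization of the same one-variable function in $r$ to produce both the radius and the smallness condition on $\lambda$ (the paper's $\lambda_1$), and a ray argument driven by the dominating $-t^q$ term to produce $u_1$ with $I(u_1)<C_3$ beyond the sphere. The only cosmetic difference is that the paper takes the ray direction to be the normalized nontrivial solution $w_q$ of $P_1$ (available in its standing hypothesis), whereas you use an arbitrary nonzero $\phi\in W_0^{s,p}(\Omega)$, which works equally well since only $\|\phi\|_{L^q(\Omega)}>0$ is needed.
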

\begin{proof}
Let us consider $u_0=0$ and let $w\in B(0,1) =\{u\in W_0^{s,
p}(\Omega): ||u||_{W_0^{s, p}(\Omega)}=1\}$. Consider $v=u_0 +rw$
for $r>0$ so that $||v-u_0||_{W_0^{s, p}(\Omega)}=r$. We first show
that there exists a $r>0$ such that for each $v$ satisfying
$||v-u_0||_{W_0^{s, p}(\Omega)}=r$ we have $I(v)\geq C_3$, where
$C_3>0$. From the monotonicity and ellipticity conditions in
$(\ref{e4})$ and $(\ref{e5})$ respectively
\begin{align*}
I(u)& =\int_{\mathbb{R}^N}\int_{\mathbb{R}^N}
\mathcal{P}\Phi(u(x)-u(y))K(x,y)dx dy -\frac{\lambda}{q}\int_\Omega
|u|^q dx - \int_\Omega f udx\\
& \geq \frac{\Lambda^{-2}}{p}||u||^p_{W_0^{s, p}(\Omega)} -
\frac{\lambda}{q}\int_\Omega |u|^q dx - \int_\Omega f udx
\end{align*}
Thus we have,
$$I(u_0+rw)-I(u_0) \geq \frac{\lambda^{-2}r^p}{p}||w||^p_{W_0^{s,
p}(\Omega)}-\frac{\lambda r^q}{q}||w||_q^q -r\int_\Omega fwdx.$$
Note that $||w||_{W_0^{s, p}(\Omega)}=1$ and $W_0^{s,
p}(\Omega)\hookrightarrow L^p(\Omega)$, hence $||w||^p_p\leq
C_4||w||_{W_0^{s, p}(\Omega)}=C_4$. Similarly, since $W_0^{s,
p}(\Omega)\hookrightarrow L^q(\Omega)$ for $q\in(p,p_s^*)$, hence we
have $||w||^q_q \leq C_5$. This leads to
$$I(u_0+rw)-I(u_0) \geq r\left[ \frac{\Lambda^{-2}r^{p-1}}{p}-\frac{r^{q-1}\lambda}{q}C_5 -C_4^{\frac{1}{p}}||f||_p'
\right].$$ Let
$F(r)=\frac{\Lambda^{-2}r^{p-1}}{p}-\frac{r^{q-1}\lambda}{q}C_5
-C_4^{\frac{1}{p}}||f||_p'$. Then $F(0)<0$ and for $r_0 =
\frac{q(p-1)}{p(q-1)}\cdot\frac{\Lambda^{-2}}{\lambda C_5}$, we see
that $F'(r_0)=0$. Further, a bit of calculus guarantees that
$F''(r_0)<0$ and hence $r_0$ is a maximizer of $F$. Note that, if
$0<\lambda < \lambda_1 =
\frac{\Lambda^{-2}q(p-1)}{C_5p(q-1)}\left(\frac{q-p}{p(q-1)}\cdot
\frac{1}{C_4^{1/p}||f||_{p'}}\right)^{\frac{q-p}{p-1}}$, then
$F(r_0)>0$. Hence there exists $r_1, r_2>0$ and $r_1<r_0<r_2$ such
that $F(r)>0$ for each $r\in(r_1, r_2)$. Thus, we choose $r=0$ such
that $||v-u_0||_{W_0^{s, p}(\Omega)}=r_0$ and for which $I(v)\geq
C_3>0$ for each $v$ such that $||v-u_0||_{W_0^{s,p}(\Omega)}=r_0$.\\
{\it Choice of $u_1$}: Let $w_q$ be a nontrivial solution of the
problem $P_1$. Then consider $g=kw_q$, $k\in \mathbb{R}$, where we
have normalized $w_q$ with respect to the norm of $W_0^{s,
p}(\Omega)$ without changing its notation. Now we have,
$$I(u)\leq \frac{\Lambda^{2}}{p}||u||^p_{W_0^{s, p}(\Omega)} -
\frac{\lambda}{q}\int_\Omega |u|^q dx - \int_\Omega f udx.$$ From
this it can be seen that
$$I(g)\leq \frac{\Lambda^{2}k^p}{p}-\frac{\lambda
k^q}{q}\int_\Omega|w_q|^q dx -kC,$$ where $C=\int_\Omega fw_qdx$.
Since $p<q<p_s^*$, we observe that $k$ can be chosen so large that
$\frac{\Lambda^{2}k^p_0}{p}-\frac{\lambda
k^q_0}{q}\int_\Omega|w_q|^q dx -k_0C<0$. Then $I(k_0w_q) <0$. Thus
we can choose $u_1= k_0w_q$, where $k_0>r_0$, due to which
$||u_1-u_0||_{W_0^{s, p}(\Omega)}>r_0$. Hence the lemma follows.
\end{proof}
\begin{lemma}\label{lem3}
The functional $I$ satisfies the Palais-Smale condition, when
$\Lambda \in [1, (\frac{q}{p})^{1/4})$.
\end{lemma}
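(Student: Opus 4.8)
\emph{Proof strategy.} The plan is to verify the Palais--Smale condition in the usual two stages: first show that every Palais--Smale sequence is bounded, then upgrade weak convergence of a subsequence to strong convergence. Throughout I write $\mathcal{J}(v)=\int_{\mathbb{R}^N}\int_{\mathbb{R}^N}\mathcal{P}\Phi(v(x)-v(y))K(x,y)\,dx\,dy$, so that $I(v)=\mathcal{J}(v)-\frac{\lambda}{q}\int_\Omega|v|^q\,dx-\int_\Omega fv\,dx$, and record the pairing $\langle-\mathscr{L}_\Phi v,v\rangle=\int_{\mathbb{R}^N}\int_{\mathbb{R}^N}\Phi(v(x)-v(y))(v(x)-v(y))K(x,y)\,dx\,dy$. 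Let $(u_n)\subset W_0^{s,p}(\Omega)$ satisfy $|I(u_n)|\le M$ and $I'(u_n)\to 0$ in $W_0^{-s,p'}(\Omega)$, so that $\langle I'(u_n),u_n\rangle=o(1)\|u_n\|_{W_0^{s,p}(\Omega)}$.

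First I would estimate $I(u_n)-\frac{1}{q}\langle I'(u_n),u_n\rangle$. The two reaction terms $\frac{\lambda}{q}\int_\Omega|u_n|^q\,dx$ cancel, leaving
\[
I(u_n)-\tfrac{1}{q}\langle I'(u_n),u_n\rangle=\mathcal{J}(u_n)-\tfrac{1}{q}\langle-\mathscr{L}_\Phi u_n,u_n\rangle-\Big(1-\tfrac{1}{q}\Big)\int_\Omega fu_n\,dx.
\]
Bounding $\mathcal{J}(u_n)$ below by $\frac{1}{p\Lambda^2}\|u_n\|_{W_0^{s,p}(\Omega)}^p$ (using $(\ref{e6})$ and the lower bound in $(\ref{e5})$) and $\langle-\mathscr{L}_\Phi u_n,u_n\rangle$ above by $\Lambda^2\|u_n\|_{W_0^{s,p}(\Omega)}^p$ (using $(\ref{e4})$ and the upper bound in $(\ref{e5})$), together with $|\int_\Omega fu_n\,dx|\le C\|f\|_{p'}\|u_n\|_{W_0^{s,p}(\Omega)}$, gives
\[
M+o(1)\|u_n\|_{W_0^{s,p}(\Omega)}\ge\Big(\tfrac{1}{p\Lambda^2}-\tfrac{\Lambda^2}{q}\Big)\|u_n\|_{W_0^{s,p}(\Omega)}^p-C\|f\|_{p'}\|u_n\|_{W_0^{s,p}(\Omega)}.
\]
Here the hypothesis $\Lambda<(q/p)^{1/4}$ is exactly what forces the coefficient $\frac{1}{p\Lambda^2}-\frac{\Lambda^2}{q}$ to be strictly positive; since $p>2-\frac{s}{N}>1$, the superlinear term dominates and $(u_n)$ is bounded in $W_0^{s,p}(\Omega)$.

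By reflexivity I would pass to a subsequence with $u_n\rightharpoonup u$, and the compact embeddings $W_0^{s,p}(\Omega)\hookrightarrow\hookrightarrow L^q(\Omega)$ and $W_0^{s,p}(\Omega)\hookrightarrow\hookrightarrow L^p(\Omega)$ give $u_n\to u$ strongly in $L^q(\Omega)$ and $L^p(\Omega)$. Next I would show $\langle-\mathscr{L}_\Phi u_n,u_n-u\rangle\to 0$: the bracket $\langle I'(u_n),u_n-u\rangle\to0$ since $I'(u_n)\to0$ in the dual and $(u_n-u)$ is bounded; $\lambda\int_\Omega|u_n|^{q-2}u_n(u_n-u)\,dx\to0$ by H\"older, because $|u_n|^{q-2}u_n$ is bounded in $L^{q/(q-1)}(\Omega)$ while $u_n-u\to0$ in $L^q(\Omega)$; and $\int_\Omega f(u_n-u)\,dx\to0$ since $f\in L^{p'}(\Omega)$ and $u_n-u\rightharpoonup0$ in $L^p(\Omega)$. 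Subtracting these from the expression $(\ref{e8})$ of $\langle I'(u_n),u_n-u\rangle$ yields $\langle-\mathscr{L}_\Phi u_n,u_n-u\rangle\to0$.

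The final and hardest step is to deduce strong convergence. I would write
\[
\langle-\mathscr{L}_\Phi u_n-(-\mathscr{L}_\Phi u),u_n-u\rangle=\langle-\mathscr{L}_\Phi u_n,u_n-u\rangle-\langle-\mathscr{L}_\Phi u,u_n-u\rangle,
\]
where the second term tends to $0$ because $-\mathscr{L}_\Phi u$ is a fixed functional in $W_0^{-s,p'}(\Omega)$ and $u_n-u\rightharpoonup0$; hence the whole left-hand side tends to $0$. The obstacle is to convert this into $\|u_n-u\|_{W_0^{s,p}(\Omega)}\to0$. The plan is to exploit the monotone/elliptic structure of $-\mathscr{L}_\Phi$: using a Simon-type algebraic inequality for $\Phi$ together with the lower bound in $(\ref{e5})$,
\[
\langle-\mathscr{L}_\Phi u_n-(-\mathscr{L}_\Phi u),u_n-u\rangle\ge\frac{c}{\Lambda}\int_{\mathbb{R}^N}\int_{\mathbb{R}^N}\frac{|(u_n-u)(x)-(u_n-u)(y)|^p}{|x-y|^{N+sp}}\,dx\,dy=\frac{c}{\Lambda}\|u_n-u\|_{W_0^{s,p}(\Omega)}^p,
\]
which forces $\|u_n-u\|_{W_0^{s,p}(\Omega)}\to0$ and closes the argument. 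I expect the genuine difficulty to lie precisely in justifying this coercivity of the monotone form across the full range $p>2-\frac{s}{N}$: the pointwise inequality is immediate when $p\ge2$, whereas for $1<p<2$ it must be recovered through a reverse-H\"older/interpolation estimate that exploits the boundedness of $(u_n)$ already obtained.
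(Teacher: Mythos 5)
Your first two steps are sound and coincide with the paper's: the computation of $I(u_n)-\tfrac{1}{q}\langle I'(u_n),u_n\rangle$ with the coefficient $\tfrac{1}{p\Lambda^2}-\tfrac{\Lambda^2}{q}$, which is positive precisely when $\Lambda<(q/p)^{1/4}$, is the paper's inequality (\ref{e9}) divided by $q$, and the extraction of a weakly convergent subsequence together with the vanishing of $\langle-\mathscr{L}_\Phi u_n,u_n-u\rangle$ also appears there. The genuine gap is your final display. The coercivity
$\langle-\mathscr{L}_\Phi u_n-(-\mathscr{L}_\Phi u),u_n-u\rangle\geq \frac{c}{\Lambda}\|u_n-u\|_{W_0^{s,p}(\Omega)}^p$
is simply not available for the operator class of this paper. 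The function $\Phi$ is only assumed continuous with $\Phi(0)=0$ and the two-sided growth bound (\ref{e4}); this forces $\Phi(t)$ to carry the sign of $t$, but it does not make $\Phi$ monotone, let alone uniformly monotone. For instance, for $t>0$ take $\Phi(t)=t^{p-1}\bigl(1+\tfrac{1}{2}\sin(k\ln t)\bigr)$ (extended oddly), which satisfies (\ref{e4}) with $\Lambda=2$ for every $k$, yet for large $k$ has decreasing stretches; on such a stretch $(\Phi(a)-\Phi(b))(a-b)<0$ for $a\neq b$. Hence no Simon-type pointwise inequality $(\Phi(a)-\Phi(b))(a-b)\geq c|a-b|^p$ can hold, and the integrated form $\langle-\mathscr{L}_\Phi v-(-\mathscr{L}_\Phi w),v-w\rangle$ need not even be nonnegative. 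The failure is structural and occurs for every $p$, including $p\geq 2$: it is the generality of $\Phi$, not the range $1<p<2$ you flag, that is the obstruction, and no reverse-H\"older or interpolation argument exploiting boundedness of $(u_n)$ can rescue an inequality that is false for the operator itself.

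The paper avoids this point entirely by never comparing $-\mathscr{L}_\Phi u_n$ with $-\mathscr{L}_\Phi u$. Setting $\tilde{u}_m=u_m-u\rightharpoonup 0$, it evaluates the operator \emph{at the difference}, studying $\langle I'(\tilde{u}_m),\tilde{u}_m\rangle$, whose leading term is
$\int_{\mathbb{R}^N}\int_{\mathbb{R}^N}\Phi(\tilde{u}_m(x)-\tilde{u}_m(y))(\tilde{u}_m(x)-\tilde{u}_m(y))K(x,y)\,dx\,dy$;
here (\ref{e4}) applied at $t=\tilde{u}_m(x)-\tilde{u}_m(y)$, combined with the lower bound in (\ref{e5}), gives at once the minorant $\Lambda^{-2}\|\tilde{u}_m\|_{W_0^{s,p}(\Omega)}^p$ with no monotonicity whatsoever. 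The price is that one must then prove $\langle I'(\tilde{u}_m),\tilde{u}_m\rangle\to 0$, which the paper does by combining the compact embeddings with the weak-convergence theorem quoted from Kussi et al.\ (Theorem 2.3) applied to the brackets $\langle I'(\tilde{u}_n),\tilde{u}_m\rangle$ and an interchange of the iterated limits. To repair your proof you should redirect the last step along these lines; alternatively, your monotonicity argument is legitimate if you restrict the lemma to the genuine fractional $p$-Laplacian $\Phi(t)=|t|^{p-2}t$ with $p\geq 2$, but that is strictly less general than the statement being proved.
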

\begin{proof}
Let $(u_m)$ be a sequence in $W_0^{s, p}(\Omega)$ such that
$|I(u_m)|\leq M$ and $I'(u_m)\rightarrow 0$ in $W^{-s, p'}(\Omega)$.
Now,
$$<I'(u_m), v> = \int_{\mathbb{R}^N}\int_{\mathbb{R}^N}
\Phi(u_m(x)-u_m(y))(v(x)-v(y))K(x,y)dxdy -\lambda\int_\Omega
|u_m|^{q-2}u_m vdx-\int_\Omega fvdx,$$ for every $v\in W_0^{s,
p}(\Omega)$. From the definition of the functional and its
derivative we have
$$<I'(u_m), u_m>=\int_{\mathbb{R}^N}\int_{\mathbb{R}^N}
\Phi(u_m(x)-u_m(y))(u_m(x)-u_m(y))K(x,y)dxdy -\lambda\int_\Omega
|u_m|^qdx-\int_\Omega fu_mdx$$
$$I(u_m)=
\int_{\mathbb{R}^N}\int_{\mathbb{R}^N}\mathcal{P}\Phi(u_m(x)-u_m(y))K(x,y)dxdy-\frac{\lambda}{q}\int_\Omega
|u_m|^q dx-\int_\Omega fu_mdx.$$ From the above two equations it
follows that
\begin{align}
q\int_{\mathbb{R}^N}\int_{\mathbb{R}^N}\mathcal{P}\Phi(u_m(x)-u_m(y))
& K(x,y)dxdy -
\int_{\mathbb{R}^N}\int_{\mathbb{R}^N}\Phi(u_m(x)-u_m(y))(u_m(x)-u_m(y))K(x,y)dxdy \nonumber \\
& =  q I(u_m) -<I'(u_m), u_m> + (q-1)\int_\Omega f u_mdx. \label{e9}
\end{align}
From ($\ref{e4}$), we have
$$ \frac{1}{\Lambda^{2}}||u_m||^p_{W_0^{s,p}(\Omega)}\leq
\int_{\mathbb{R}^N}\int_{\mathbb{R}^N}\phi(u_m(x)-u_m(y))(u_m(x)-u_m(y))K(x,y)dxdy\leq
\Lambda^{2}||u_m||^p_{W_0^{s, p}(\Omega)}$$ and hence
$$\frac{1}{\Lambda^2p}||u_m||^p_{W_0^{s, p}(\Omega)}\leq  \int_{\mathbb{R}^N}\int_{\mathbb{R}^N}\mathcal{P}\phi(u_m(x)-u_m(y))K(x,y)dxdy \leq
\frac{\Lambda^2}{p}||u_m||^p_{W_0^{s, p}(\Omega)}.$$  From
($\ref{e9}$), we get
\begin{align*}
\frac{(q-p\Lambda^4)}{\Lambda^2 p}||u_m||^p_{W_0^{s, p}(\Omega)} & \leq q I(u_m) -<I'(u_m), u_m> + (q-1)\int_\Omega f u_mdx\\
& \leq q M + ||I'(u_m)||_{-s,p'}||u_m||_{W_0^{s, p}(\Omega)} +
(q-1)c||f||_{p'}||u_m||_{W_0^{s, p}(\Omega)}
\end{align*}
It follows from this that $(||u_m||_{W_0^{s, p}(\Omega)})$ is
bounded. If not then, divide by $||u_m||^p_{W_0^{s, p}(\Omega)}$ in
the above and let $m\rightarrow \infty$. On using
$||I'(u_m)||_{-s,p'}\rightarrow 0$, we get a contradiction, viz.,
$\frac{(q-p\Lambda^4)}{\Lambda^2 p}\leq 0$, by the assumption $q-p\Lambda^4 >0$. \\
Thus there exists a subsequence of $(u_m)$, which will still be
denoted by $(u_m)$, converge weakly to $u$ in $W_0^{s, p}(\Omega)$.
We will now show that this subsequence $(u_m)$ is
strongly convergent in $W_0^{s, p}(\Omega)$. \\
We know that $W_0^{s, p}(\Omega)$ is compactly embedded in
$L^r(\Omega)$, $r\in[1,p_s^*)$ and hence $u_m\rightarrow u$ in
$L^r(\Omega)$, for $1\leq r <p_s^*$. Consider $\tilde{u}_m = u_m-u$.
Then $ \tilde{u}_m \rightharpoonup 0$ in $W_0^{s, p}(\Omega)$.
Consider,
\begin{eqnarray}
<I'(\tilde{u}_m),\tilde{u}_m>
&=&\int_{\mathbb{R}^N}\int_{\mathbb{R}^N}\Phi(\tilde{u}_m(x)-\tilde{u}_m(y))(\tilde{u}_m(x)-\tilde{u}_m(y))K(x,y)dxdy
\nonumber\\
&-&\lambda\int_\Omega |\tilde{u}_m|^qdx-\int_\Omega
f\tilde{u}_mdx\label{e10}
\end{eqnarray}
The second and the third term of the functional approach to $0$ as
$m\rightarrow\infty$. This is because $\tilde{u}_m \rightharpoonup
0$ in $W_0^{s, p}(\Omega)$ implies $\int_\Omega f\tilde{u}_mdx
\rightarrow 0$ and $u_m\rightarrow u$ in $L^r(\Omega)$ for $r\in
[1,p_s^*)$ implies $\int_\Omega |\tilde{u}_m|^qdx \rightarrow 0$. By
the definition of weak convergence, for all $u\in W_0^{s,
p}(\Omega)$, $<I'(u), \tilde{u}_m> \rightarrow 0$, as $m\rightarrow
\infty$. That is, $\displaystyle{\lim_{m\rightarrow \infty} <I'(u),
\tilde{u}_m> =0}$, for all $u\in W_0^{s, p}(\Omega)$. Upon taking
$u=\tilde{u}_n$,
$$\lim_{n\rightarrow \infty}\lim_{m\rightarrow \infty} <I'(\tilde{u}_n), \tilde{u}_m>
=0.$$ We now consider,
\begin{equation}
<I'(\tilde{u}_n),
\tilde{u}_m>=\int_{\mathbb{R}^N}\int_{\mathbb{R}^N}
\phi(\tilde{u}_n(x)-\tilde{u}_n(y))(\tilde{u}_m(x)-\tilde{u}_m(y))K(x,y)dxdy
-\lambda\int_\Omega |\tilde{u}_n|^{q-2}\tilde{u}_n
\tilde{u}_mdx-\int_\Omega f\tilde{u}_m dx.\label{e11}
\end{equation}
Since $\tilde{u}_n \rightharpoonup 0$, hence by the work of Kussi
\cite{kussi}, we have
$$\displaystyle{\lim_{n\rightarrow
\infty}\int_{\mathbb{R}^N}\int_{\mathbb{R}^N}
\phi(\tilde{u}_n(x)-\tilde{u}_n(y))(\tilde{u}_m(x)-\tilde{u}_m(y))K(x,y)dxdy=0.}$$
In addition to this $\displaystyle{\lim_{n\rightarrow
\infty}\int_\Omega |\tilde{u}_n|^{q-2}\tilde{u}_n \tilde{u}_mdx=
0}$. Therefore, taking limit $n\rightarrow \infty$ in $(\ref{e11})$,
we have,
$$\lim_{n\rightarrow \infty}<I'(\tilde{u}_n), \tilde{u}_m> =
\int_\Omega f\tilde{u}_m dx.$$ On further taking limit $m\rightarrow
\infty$ in the above we get,
$$\lim_{m\rightarrow \infty}\lim_{n\rightarrow \infty}<I'(\tilde{u}_n),
\tilde{u}_m> =0.$$ Therefore, we have
$$\lim_{m\rightarrow \infty}\lim_{n\rightarrow \infty}<I'(\tilde{u}_n),
\tilde{u}_m> = \lim_{n\rightarrow \infty}\lim_{m\rightarrow \infty}
<I'(\tilde{u}_n), \tilde{u}_m> =0.$$ Hence it follows that
$\underset{m\rightarrow \infty}{\lim} <I'(\tilde{u}_m), \tilde{u}_m>
=0$. Now taking limit $m\rightarrow \infty$ in ($\ref{e10}$), we get
$$\lim_{m\rightarrow \infty}\int_{\mathbb{R}^N}\int_{\mathbb{R}^N}
\Phi(\tilde{u}_m(x)-\tilde{u}_m(y))(\tilde{u}_m(x)-\tilde{u}_m(y))K(x,y)dxdy=0.$$
But,
$$ 0 \leq \frac{1}{\Lambda^{2}}||\tilde{u}_m||^p_{W_0^{s,
p}(\Omega)}\leq
\int_{\mathbb{R}^N}\int_{\mathbb{R}^N}\Phi(\tilde{u}_m(x)-\tilde{u}_m(y))(\tilde{u}_m(x)-\tilde{u}_m(y))K(x,y)dxdy.$$
From this it is clear that $\displaystyle{\lim_{m\rightarrow
\infty}||\tilde{u}_m||_{W_0^{s, p}(\Omega)}}=0$, that is
$\tilde{u}_m \rightarrow 0$ in $W_0^{s, p}(\Omega)$. Hence
$u_m\rightarrow u$ in $W_0^{s, p}(\Omega)$.
\end{proof}
\begin{theorem}
Suppose that the problem
\begin{align*}
\begin{split}
P_1:~~-\mathscr{L}_\Phi u & = \lambda |u|^{q-2}u\,\,\mbox{in}\,\,\Omega,\\
u & = 0\,\, \mbox{on}\,\, \mathbb{R}^N\setminus \Omega
\end{split}
\end{align*}
has a nontrivial weak solution for some $\lambda>0$, where $q\in(p,
p_s^*)$. Then there exists $\lambda_1>0$ such that for all
$\lambda\in (0,\lambda_1$), the problem
\begin{align*}
\begin{split}
P_2:~~-\mathscr{L}_\Phi u & = \lambda |u|^{q-2}u +f,\,\,\,f\in L^{p'}(\Omega),\\
u & = 0\,\, \mbox{on}\,\, \mathbb{R}^N\setminus \Omega
\end{split}
\end{align*}
has a nontrivial weak solution whenever $\Lambda \in[1,
(\frac{q}{p})^{1/4})$, where the $\Lambda$ given in the equation
($\ref{e4}$).
\end{theorem}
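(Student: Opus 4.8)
The plan is to recognize that the three preceding lemmas have been engineered precisely to verify the three hypotheses of the Mountain--Pass Theorem of Ambrosetti--Rabinowitz, so the proof reduces to assembling them and extracting nontriviality of the resulting critical point. Throughout I work in the Banach space $W_0^{s,p}(\Omega)$, which by the discussion of Section~2 is uniformly convex and reflexive and hence a legitimate ambient space for the theorem. The functional of interest is $I=I_{P_2}$ defined in $(\ref{e7})$; by $(\ref{e8})$ and the definition of a weak solution of $P_2$, its critical points are exactly the weak solutions of $P_2$, since $\langle I'(u),\varphi\rangle=0$ for all $\varphi\in C_c^\infty(\Omega)$ is precisely the weak formulation of $P_2$.

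First I would fix $\lambda_1$ to be exactly the threshold produced in Lemma~\ref{lem2}, namely
$$\lambda_1 = \frac{\Lambda^{-2}q(p-1)}{C_5\,p(q-1)}\left(\frac{q-p}{p(q-1)}\cdot\frac{1}{C_4^{1/p}\|f\|_{p'}}\right)^{\frac{q-p}{p-1}},$$
and restrict $\lambda\in(0,\lambda_1)$ so that the mountain-pass geometry of Lemma~\ref{lem2} is available. With this choice I would verify the hypotheses of the Mountain--Pass Theorem one at a time: by Lemma~\ref{lem1}, $I\in C^1(W_0^{s,p}(\Omega),\mathbb{R})$; from $(\ref{e7})$ together with $\mathcal{P}\Phi(0)=0$ one has $I(0)=0$; Lemma~\ref{lem2}, applied with $u_0=0$, furnishes a radius $r_0>0$ and a constant $C_3>0$ with $I(v)\geq C_3$ for every $v$ on the sphere $\|v\|_{W_0^{s,p}(\Omega)}=r_0$, while its second part produces $u_1=k_0w_q$, built from the nontrivial weak solution $w_q$ of $P_1$ supplied by hypothesis, satisfying $\|u_1\|_{W_0^{s,p}(\Omega)}>r_0$ and $I(u_1)<0$; finally Lemma~\ref{lem3} delivers the Palais--Smale condition under the standing assumption $\Lambda\in[1,(q/p)^{1/4})$, which also secures the inequality $q-p\Lambda^4>0$ used there.

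Having checked these four conditions, I would invoke the Mountain--Pass Theorem to obtain a critical point $u\in W_0^{s,p}(\Omega)$ of $I$ at the min-max level
$$c=\inf_{\gamma\in\Gamma}\max_{t\in[0,1]}I(\gamma(t))\geq C_3>0,\qquad \Gamma=\{\gamma\in C([0,1],W_0^{s,p}(\Omega)):\gamma(0)=0,\ \gamma(1)=u_1\}.$$
The decisive point for nontriviality is that $I(u)=c\geq C_3>0=I(0)$, so $u\neq 0$; thus $u$ is a nontrivial critical point of $I$, and by the identification of critical points with weak solutions recorded above, $u$ is a nontrivial weak solution of $P_2$. This completes the argument.

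The proof of the theorem itself is short because all the analytic weight has been front-loaded into the lemmas. The step I expect to be the genuine obstacle --- and the one on which the restrictions $\Lambda\in[1,(q/p)^{1/4})$ and $\lambda<\lambda_1$ ultimately rest --- is the compactness encoded in Lemma~\ref{lem3}: establishing the Palais--Smale condition requires first extracting a bounded, hence weakly convergent, subsequence (which uses $q-p\Lambda^4>0$) and then upgrading weak to strong convergence via the weak-continuity result for $-\mathscr{L}_\Phi$ stated earlier together with the compact embedding $W_0^{s,p}(\Omega)\hookrightarrow L^r(\Omega)$ for $r<p_s^*$. A secondary point worth stating carefully is that the descent direction $u_1$ is available only because $P_1$ is assumed to admit a nontrivial solution $w_q$; it is exactly here that the hypothesis on $P_1$ enters, so the implication $P_1\Rightarrow P_2$ is not vacuous.
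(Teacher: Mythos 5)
Your proof is correct and takes essentially the same route as the paper: it assembles Lemmas \ref{lem1}, \ref{lem2} and \ref{lem3} as the hypotheses of the Mountain--Pass Theorem and identifies the resulting critical point as a weak solution of $P_2$. Your write-up is in fact more complete than the paper's two-sentence proof, since you make explicit the choice of $\lambda_1$ from Lemma \ref{lem2}, the role of the assumed nontrivial solution $w_q$ of $P_1$ in constructing the descent direction $u_1$, and the nontriviality argument $I(u)=c\geq C_3>0=I(0)$, all of which the paper leaves implicit.
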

\begin{proof}
By the results proved in Lemmas $\ref{lem1}$, $\ref{lem2}$ and
$\ref{lem3}$, it follows that the functional $I$ associated with the
problem $P_2$, satisfies all the condition of Mountain-Pass theorem.
So by the Mountain-Pass theorem, an extreme point for $I$ exists in
$W_0^{s, p}(\Omega)$, which is a weak solution to the problem $P_2$.
\end{proof}
\noindent Conversely, suppose that  to each $f\in L^{p'}(\Omega)$,
the problem
\begin{align*}
\begin{split}
P_2:~~-\mathscr{L}_\Phi u & = \lambda |u|^{q-2}u +f,\,\,\,f\in L^{p'}(\Omega),\\
u & = 0\,\, \mbox{on}\,\, \mathbb{R}^N\setminus \Omega
\end{split}
\end{align*}
has a nontrivial solution on the set $\mathfrak{M}= \{u\in W_0^{s,
p}(\Omega): ||u||_q=1\}$ for some $\lambda >0$, where $q\in(p,
p_s^*)$. Existence of such a solution can be seen from the weak
lower semi continuity and coercivity of the associated functional
$I$ on the subset $\mathfrak{M}$ of $W_0^{s, p}(\Omega)$. In order
to show the existence of the problem $P_1$ for $q\in(p, p_s^*)$, let
us consider a sequence $(f_n)\subset L^{p'}(\Omega)$ such that
$f_n\rightarrow 0$ in $L^{p'}(\Omega)$. Then for each such $f_n$,
there exists a weak solution to the problem $P_2$, say $u_n$. Thus
each $u_n$ is a critical point of the functional $I$, i.e.,
$<I'(u_n), \varphi>=0$ for every $\phi\in W_0^{s, p}(\Omega)$. In
particular, $<I'(u_n), u_n>=0$. This implies that
$$\int_{\mathbb{R}^N}\int_{\mathbb{R}^N}
\Phi(u_n(x)-u_n(y))(u_n(x)-u_n(y))K(x,y)dxdy -\lambda\int_\Omega
|u_n|^q dx =\int_\Omega f_n u_n dx.\ref{eq11'}$$ Further, since
$||u_n||_q =1$ and using $(\ref{e4})$ in $(\ref{eq11'})$, we have
\begin{align*}
\Lambda^{-2}||u_n||^p_{W_0^{s, p}(\Omega)}-\lambda & \leq
\int_\Omega f_n u_n
dx\\
& \leq C ||f_n||_{p'}||u_n||_{W_0^{s, p}(\Omega)}\\
& \leq CM||u_n||_{W_0^{s, p}(\Omega)},
\end{align*}
Where $M$ is such that $||f_n||_{p'} \leq M$. It follows that
$(||u_n||_{W_0^{s, p}(\Omega)})$ is bounded, for if not, then divide
by $||u_n||^p_{W_0^{s, p}(\Omega)}$ and let $n\rightarrow \infty$,
to get a contradiction, viz., $\Lambda^{-2}\leq 0$. Hence there
exists a subsequence $(u_n)$, which converge weakly to $u$ in
$W_0^{s, p}(\Omega)$. Since $W_0^{s, p}(\Omega)$ is compactly
embedded in $L^q(\Omega)$ for $q\in(p, p_s^*)$, hence
$u_n\rightarrow u$ in $L^q(\Omega)$. Therefore,
$$\int_\Omega |u_n|^{q-2}u_n vdx \rightarrow \int_\Omega |u|^{q-2}u
vdx$$ and $\int_{\Omega}f_nvdx\rightarrow 0$ for each $v\in W_0^{s,
p}(\Omega)$. By kussi et al. \cite{kussi} we have,
$$\int_{\mathbb{R}^N}\int_{\mathbb{R}^N}
\Phi(u_n(x)-u_n(y))(v(x)-v(y))K(x,y)dxdy \rightarrow
\int_{\mathbb{R}^N}\int_{\mathbb{R}^N}
\Phi(u(x)-u(y))(v(x)-v(y))K(x,y)dxdy.\label{eq12'}$$
Taking limit $n\rightarrow 0$ in $(\ref{eq12'})$ we get
$$\int_{\mathbb{R}^N}\int_{\mathbb{R}^N}
\Phi(u(x)-u(y))(v(x)-v(y))K(x,y)dxdy - \int_\Omega |u|^{q-2}u vdx
=0,$$ for each $v\in W_0^{s, p}(\Omega)$. This shows that $u$ is
weak solution of the problem $P_1$.\\
Assume that $\lambda \in \left( 0, \displaystyle{\inf _{u\neq 0\in
W_0^{s, p}(\Omega)}\frac{||u||_{W_0^{s,
p}(\Omega)}}{||u||_q}}\right]$. As in the proof of Lemma
$\ref{lem3}$, it is easy to see that $$||u_n||_{W_0^{s, p}(\Omega)}
\rightarrow ||u||_{W_0^{s, p}(\Omega)}.$$ Since $||u_n||_q =1 $ and
$u_n\rightarrow u$ in $W_0^{s, p}(\Omega)$, we have
\begin{align*}
0<\lambda & \leq \lim \inf \frac{||u_n||_{W_0^{s,
p}(\Omega)}}{||u_n||_q}\\
& = \lim \inf ||u_n||_{W_0^{s, p}(\Omega)}\\
& = ||u||_{W_0^{s, p}(\Omega)}
\end{align*}
This implies that $u$ is a nontrivial weak solution of the problem
$P_1$. We thus have proved the following theorem.
\begin{theorem}
Suppose that to each $f\in L^{p'}(\Omega)$, the problem
\begin{align*}
\begin{split}
P_2:~~-\mathscr{L}_\Phi u & = \lambda |u|^{q-2}u +f,\,\,\,f\in L^{p'}(\Omega),\\
u & = 0\,\, \mbox{on}\,\, \mathbb{R}^N\setminus \Omega
\end{split}
\end{align*}
has a nontrivial solution in $\mathfrak{M} \subset W_0^{s,
p}(\Omega)$ for some $\lambda >0$, where $q\in(p, p_s^*)$. Then the
problem
\begin{align*}
\begin{split}
P_1:~~-\mathscr{L}_\Phi u & = \lambda |u|^{q-2}u,\\
u & = 0\,\, \mbox{on}\,\, \mathbb{R}^N\setminus \Omega
\end{split}
\end{align*}
has a nontrivial solution in $W_0^{s, p}(\Omega)$, whenever $\lambda
\in \left( 0, \displaystyle{\inf _{u\neq 0\in W_0^{s,
p}(\Omega)}\frac{||u||_{W_0^{s, p}(\Omega)}}{||u||_q}}\right]$.
\end{theorem}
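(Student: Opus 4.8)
The plan is to realize the desired nontrivial weak solution of $P_1$ as a limit of weak solutions to the perturbed problems $P_2$ whose data vanish in $L^{p'}(\Omega)$. First I would fix a sequence $(f_n) \subset L^{p'}(\Omega)$ with $f_n \to 0$ in $L^{p'}(\Omega)$ and $||f_n||_{p'} \leq M$ for some $M > 0$. By hypothesis, for each $n$ the problem $P_2$ with datum $f_n$ admits a nontrivial weak solution $u_n \in \mathfrak{M}$, so that $||u_n||_q = 1$ and $\langle I'(u_n), \varphi \rangle = 0$ for every $\varphi \in W_0^{s,p}(\Omega)$. Testing against $\varphi = u_n$ and invoking the monotonicity bound $(\ref{e4})$ together with the normalization $||u_n||_q = 1$, I would derive the inequality $\Lambda^{-2}||u_n||_{W_0^{s,p}(\Omega)}^p - \lambda \leq CM||u_n||_{W_0^{s,p}(\Omega)}$, which forces $(||u_n||_{W_0^{s,p}(\Omega)})$ to be bounded; otherwise, dividing by $||u_n||_{W_0^{s,p}(\Omega)}^p$ and letting $n \to \infty$ yields the absurdity $\Lambda^{-2} \leq 0$.

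With boundedness in hand, I would use reflexivity of $W_0^{s,p}(\Omega)$ to extract a subsequence (not relabelled) with $u_n \rightharpoonup u$, and the compact embedding $W_0^{s,p}(\Omega) \hookrightarrow L^q(\Omega)$, valid for $q \in (p, p_s^*)$, to obtain $u_n \to u$ strongly in $L^q(\Omega)$. These two facts give, for each fixed $v \in W_0^{s,p}(\Omega)$, the convergences $\int_\Omega |u_n|^{q-2}u_n v\,dx \to \int_\Omega |u|^{q-2}u v\,dx$ and $\int_\Omega f_n v\,dx \to 0$. The nonlocal term is handled by the common Theorem quoted from Kussi et al. \cite{kussi}: since $u_n \rightharpoonup u$, the double integral $\int_{\mathbb{R}^N}\int_{\mathbb{R}^N}\Phi(u_n(x)-u_n(y))(v(x)-v(y))K(x,y)\,dx\,dy$ converges to the corresponding expression with $u$ in place of $u_n$. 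Passing to the limit in the identity $\langle I'(u_n), v \rangle = 0$ then shows that $u$ satisfies the weak formulation of $P_1$ for every $v \in W_0^{s,p}(\Omega)$, so $u$ is a weak solution of $P_1$.

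The remaining and most delicate point is to rule out $u \equiv 0$, and this is exactly where the restriction $\lambda \in \big(0,\, \inf_{u \neq 0} ||u||_{W_0^{s,p}(\Omega)}/||u||_q\,\big]$ enters. The plan is first to upgrade the weak convergence to strong convergence $u_n \to u$ in $W_0^{s,p}(\Omega)$ by repeating the argument of Lemma $\ref{lem3}$: setting $\tilde{u}_n = u_n - u \rightharpoonup 0$, I would show that the reaction and forcing terms vanish, apply the Kussi convergence result to the nonlocal term through the double-limit device, and then invoke the ellipticity bounds $(\ref{e4})$--$(\ref{e5})$ to conclude $||\tilde{u}_n||_{W_0^{s,p}(\Omega)} \to 0$. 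Strong convergence yields $||u_n||_{W_0^{s,p}(\Omega)} \to ||u||_{W_0^{s,p}(\Omega)}$. Combining this with $||u_n||_q = 1$ and the definition of the infimum bounding $\lambda$, I would estimate $0 < \lambda \leq ||u_n||_{W_0^{s,p}(\Omega)}/||u_n||_q = ||u_n||_{W_0^{s,p}(\Omega)}$, whence $||u||_{W_0^{s,p}(\Omega)} = \lim_n ||u_n||_{W_0^{s,p}(\Omega)} \geq \lambda > 0$ and therefore $u \neq 0$. I expect the strong-convergence step to be the main obstacle, since establishing $\langle I'(\tilde{u}_n), \tilde{u}_n \rangle \to 0$ requires the careful iterated-limit argument of Lemma $\ref{lem3}$ and the weak-continuity property of $-\mathscr{L}_\Phi$; once it is secured, the nontriviality of $u$ follows immediately from the constraint on $\lambda$.
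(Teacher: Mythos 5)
Your proposal is correct and follows essentially the same route as the paper: a sequence of $P_2$-solutions $u_n \in \mathfrak{M}$ with data $f_n \to 0$, the a priori bound from testing with $u_n$ and the normalization $||u_n||_q = 1$, weak convergence plus compact embedding plus the Kussi convergence theorem to pass to the limit in the weak formulation, and finally the upgrade to strong convergence via the argument of Lemma \ref{lem3} so that the restriction on $\lambda$ forces $||u||_{W_0^{s,p}(\Omega)} \geq \lambda > 0$. The paper's own proof is exactly this, with the strong-convergence step likewise delegated to Lemma \ref{lem3}.
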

\section{A necessary condition for the existence of a weak solution}
We now prove a necessary for the existence of a weak solution to the
problem
\begin{eqnarray}
-\mathscr{L}_\Phi u & = & \lambda |u|^{q-2}u+\mu, \nonumber\\
u & = & 0\,\, \mbox{in}\,\, \mathbb{R}^N\setminus
\Omega\label{eq_ns}
\end{eqnarray}
where $\mu$ is a measure.
\begin{theorem}
Suppose $2<p<q$ and $u$ is a weak solution to the problem
$(\ref{eq_ns})$ then $\mu\in L^{1}(\Omega)+W^{-s,p'}(\Omega)$.
\end{theorem}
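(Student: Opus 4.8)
The plan is to read the decomposition of $\mu$ off the weak formulation itself and then check that each piece lands in the right space. Since $u\in W_0^{s,p}(\Omega)$ is a weak solution of $(\ref{eq_ns})$, testing against $\varphi\in C_c^\infty(\Omega)$ gives
$$\int_\Omega \varphi\,d\mu = \int_{\mathbb{R}^N}\int_{\mathbb{R}^N}\Phi(u(x)-u(y))(\varphi(x)-\varphi(y))K(x,y)\,dx\,dy - \lambda\int_\Omega |u|^{q-2}u\,\varphi\,dx,$$
so that, as distributions on $\Omega$, $\mu = -\mathscr{L}_\Phi u - \lambda|u|^{q-2}u$. It therefore suffices to show that the first term lies in $W^{-s,p'}(\Omega)$ and the second in $L^1(\Omega)$, after which the asserted membership $\mu\in L^1(\Omega)+W^{-s,p'}(\Omega)$ follows from the density of $C_c^\infty(\Omega)$ in $W_0^{s,p}(\Omega)$ recorded in Section~2.

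First I would show $-\mathscr{L}_\Phi u\in W^{-s,p'}(\Omega)$, which is exactly the estimate already performed in Lemma~\ref{lem1}. From $(\ref{e4})$ one has $|\Phi(t)|\le \Lambda|t|^{p-1}$; combining this with the upper bound on $K$ in $(\ref{e5})$, splitting the kernel as $|x-y|^{-(N+sp)/p'}\,|x-y|^{-(N+sp)/p}$, and applying H\"older's inequality with exponents $p'$ and $p$ (using $(p-1)p'=p$) yields
$$|<-\mathscr{L}_\Phi u,\varphi>|\le \Lambda^2\, [u]_{s,p}^{\,p-1}\,[\varphi]_{s,p}=\Lambda^2\,\|u\|_{W_0^{s,p}(\Omega)}^{\,p-1}\,\|\varphi\|_{W_0^{s,p}(\Omega)}.$$
Hence $-\mathscr{L}_\Phi u$ extends to a bounded linear functional on $W_0^{s,p}(\Omega)$, i.e. an element of $W^{-s,p'}(\Omega)$.

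Next I would show $\lambda|u|^{q-2}u\in L^1(\Omega)$, and this is where the hypotheses $q<p_s^*$, $q>p>2$, and the boundedness of $\Omega$ are used. Since $u\in W_0^{s,p}(\Omega)\hookrightarrow L^{p_s^*}(\Omega)$ and $q>2$, we have $1\le q-1<p_s^*$, so H\"older's inequality on the bounded set $\Omega$ with exponent $r=p_s^*/(q-1)>1$ gives
$$\int_\Omega |u|^{q-1}\,dx\le |\Omega|^{1/r'}\,\|u\|_{L^{p_s^*}(\Omega)}^{\,q-1}<\infty,$$
and in the borderline case $sp\ge N$ the embedding into every $L^r(\Omega)$, $r<\infty$, gives the same conclusion. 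Thus $\lambda|u|^{q-2}u=\lambda|u|^{q-1}\operatorname{sgn}(u)\in L^1(\Omega)$.

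Putting the two facts together, $\mu=(-\mathscr{L}_\Phi u)+(-\lambda|u|^{q-2}u)\in W^{-s,p'}(\Omega)+L^1(\Omega)$, as claimed. The proof is essentially bookkeeping once the two growth estimates are in place; the only point requiring care is the first step, where one must confirm that the distribution $-\mathscr{L}_\Phi u$, a priori defined only through pairing with smooth compactly supported test functions, is genuinely represented by the bounded functional produced by the H\"older estimate, so that the identity $\mu=-\mathscr{L}_\Phi u-\lambda|u|^{q-2}u$ is valid in $W^{-s,p'}(\Omega)+L^1(\Omega)$ and not merely against $C_c^\infty(\Omega)$. This is precisely what the density of $C_c^\infty(\Omega)$ in $W_0^{s,p}(\Omega)$ secures, and I expect it to be the main (though mild) obstacle.
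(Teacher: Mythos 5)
Your proof is correct, but it takes a genuinely different route from the paper. The paper never exhibits the decomposition explicitly: it tests the equation against cutoffs $\phi\geq\chi_K$ to obtain the estimate $(\ref{cap})$, namely $\mu(K)\leq \left(C_6\,\|{-\mathscr{L}_\Phi u}\|_{W^{-s,p'}(\Omega)}+C_7\,\|u\|_q^{q-1}\right)\|\phi\|_{s,q}$, deduces from it that $\mu(K)=0$ whenever $Cap_{s,q}(K)=0$, and then invokes the Gallou\"et--Morel characterization of \cite{gall} (a measure lies in $L^1+W^{-s,p'}$ if and only if it vanishes on compact sets of zero capacity) to conclude. You instead read the decomposition $\mu=-\mathscr{L}_\Phi u-\lambda|u|^{q-2}u$ off the weak formulation and verify each summand directly: the nonlocal term lies in $W^{-s,p'}(\Omega)$ by the same H\"older estimate as in Lemma~\ref{lem1} together with the density of $C_c^\infty(\Omega)$ in $W_0^{s,p}(\Omega)$, and the power term lies in $L^1(\Omega)$ by the Sobolev embedding. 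Your route buys self-containedness and constructiveness: it avoids transplanting the Gallou\"et--Morel theorem (proved in \cite{gall} in the local setting) to the fractional framework, which the paper cites without justification, and it needs no sign assumption on $\mu$, whereas the paper's first step $\mu(K)\leq\int_\Omega\phi\,d\mu$ implicitly treats $\mu$ as nonnegative (or requires replacing $\mu$ by $|\mu|$). What the paper's route buys is the intermediate, intrinsic statement that $\mu$ charges no set of zero $(s,q)$-capacity, which is the capacity-theoretic content advertised in the abstract and is the form of the necessary condition that survives in situations where no explicit decomposition is available. Note finally that both arguments rest on the same two finiteness facts, $-\mathscr{L}_\Phi u\in W^{-s,p'}(\Omega)$ and $u\in L^q(\Omega)$, i.e.\ that $u$ is an energy solution: you make them explicit, while the paper uses them silently inside $(\ref{cap})$.
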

\begin{proof}
Suppose $u$ is a weak solution to $(\ref{eq_ns})$. Choose a test
function $\phi\in C_0^{\infty}(\Omega)$ such that $\phi(x)\equiv 1$
over a compact subset of $\Omega$ say $K$ and $0\leq \phi \leq 1$ in
$\Omega$ from which one has $\phi\geq \chi_{K}$. Thus
\begin{eqnarray}
\mu(K) & \leq & |\int_{\Omega}-\mathscr{L}_\Phi u \phi
dx+\int_{\Omega}|u|^{q-2}u\phi dx| \nonumber\\
& \leq &
(C_6||-\mathscr{L}_{\Phi}u||_{W^{-s,p'}(\Omega)}+C_7||u||_{q}^{q-1})||\phi||_{s,q}\label{cap}.
\end{eqnarray}
\begin{definition}
Suppose $K\subset\Omega$ is a compact set, then we define the
capacity as
\begin{eqnarray}
Cap_{s,q}(K)&=&\inf\{||\phi||_{W^{s,q}(\Omega)}^q:\phi\in
C_c^{\infty}(\Omega), 0\leq \phi \leq 1,\phi\equiv
1~\text{in}~K\}.\nonumber
\end{eqnarray}
\end{definition}
\noindent We refer to a result from Gallouet et al \cite{gall} that
$\mu\in L^1(\Omega)+W^{-s,p}(\Omega)$ iff $\mu(K)=0$ whenever
$Cap_{s,q}(K)=0$ for $K$ compact subset of $\Omega$. Coming back to
our theorem, if you suppose $Cap_{s,q}(K)=0$ then there exists
$(\phi_n)$ such that $||\phi_n||_{W^{s,q}(\Omega)}^{q}\rightarrow
0$. Hence if this sequence $(\phi_n)$ is used in $(\ref{cap})$, one
has $\mu(K)=0$. Thus we have $\mu\in L^1(\Omega)+W^{-s,p'}(\Omega)$.

\end{proof}
\section*{Acknowledgement}
One of the author R. Kr. Giri thanks the financial assistantship
received from the Ministry of Human Resource Development (M. H. R.
D), Govt. of India. All the authors also acknowledge the facilities
received from the Department of mathematics, National Institute of
Technology Rourkela.

{\sc Ratan Kr Giri}, {\sc D. Choudhuri} and {\sc Amita Soni}\\
Department of Mathematics,\\
National Institute of Technology Rourkela, Rourkela - 769008,
India\\
e-mails: giri90ratan@gmail.com, dc.iit12@gmail.com
\end{document}